\renewcommand{\baselinestretch}{1.22}
\newcommand{\doi}[1]{\href{http://dx.doi.org/#1}{\texttt{doi:#1}}}
\newcommand{\urlprefix}{}
\theoremstyle{plain} 
\newtheorem{theorem}{Theorem}
\newcommand{\thmlabel}[1]{\label{thm:#1}}
\newcommand{\thmref}[1]{Theorem~\ref{thm:#1}}
\newcommand{\CartProd}{\ensuremath{\square}}
\newcommand{\CP}[2]{\ensuremath{#1{\,\CartProd\,}#2}}
\newcommand{\tw}[1]{\ensuremath{\textup{\textsf{tw}}(#1)}}
\newcommand{\bw}[1]{\ensuremath{\textup{\textsf{bw}}(#1)}}
\newcommand{\B}{\ensuremath{\mathcal{B}}}
\begin{document}
\title[Treewidth of Cartesian Products of Highly Connected
Graphs]{Treewidth of Cartesian Products \\ of Highly Connected Graphs}
\author{David~R.~Wood} \thanks{Department of Mathematics and
  Statistics, The University of Melbourne, Australia
  (\texttt{woodd@unimelb.edu.au}).  Supported by QEII Research
  Fellowship from the Australian Research Council. } \date{\today}
\begin{abstract}
  The following theorem is proved: For all $k$-connected graphs $G$
  and $H$ each with at least $n$ vertices, the treewidth of the
  cartesian product of $G$ and $H$ is at least $k(n -2k+2)-1$. For
  $n\gg k$ this lower bound is asymptotically tight for particular
  graphs $G$ and $H$. This theorem generalises a well known result
  about the treewidth of planar grid graphs.
\end{abstract}

\maketitle

Treewidth is a graph parameter of fundamental importance in graph
minor theory, with numerous applications in algorithmic theory and
practical computing. The planar grid graph is a key example for
treewidth, in that the $n\times n$ planar grid has treewidth $n$, and
every graph with sufficiently large treewidth contains the $n\times n$
planar grid as a minor.

Motivated by the fact that the planar grid can be defined to be the
cartesian product of two paths, in this note we consider the treewidth
of cartesian products of general graphs. Our main result is a lower
bound on the treewidth of the cartesian product of two highly
connected graphs; see \citep{Djelloul-TCS09, Chvat-DM75, KA-DM02,
  Harper-TCS03, Harper-JCT66, BBHS-TCS08, CK-DM06, Wood-ProductMinor,
  WYY10,OS11, FitzGerald-MC74} for related results. Before stating the
theorem, we introduce the necessary definitions.

The \emph{cartesian product} of graphs $G$ and $H$, denoted by
\CP{G}{H}, is the graph with vertex set $V(\CP{G}{H}):=V(G)\times
V(H)$, where $(v,x)(w,y)$ is an edge of \CP{G}{H} if and only if
$vw\in E(G)$ and $x=y$, or $v=w$ and $xy\in E(H)$.  For each vertex
$v\in V(G)$ the subgraph of \CP{G}{H} induced by $\{(v,w):w\in V(H)\}$
is isomorphic to $H$; we call it the \emph{$v$-copy} of $H$, denoted
by $H_v$. Similarly, for each vertex $w\in V(H)$ the subgraph of
\CP{G}{H} induced by $\{(v,w):v\in V(G)\}$ is isomorphic to $G$; we
call it the \emph{$w$-copy} of $G$, denoted by $G_w$.

A \emph{tree decomposition} of a graph $G$ consists of a tree $T$ and
a set $\{T_x\subseteq V(G):x\in V(T)\}$ of `bags' of vertices of $G$
indexed by $T$, such that
\begin{itemize}
\item for each edge $vw\in E(G)$, some bag $T_x$ contains both $v$ and
  $w$, and
\item for each vertex $v\in V(G)$, the set $\{x\in V(T):v\in T_x\}$
  induces a non-empty (connected) subtree of $T$.
\end{itemize}
The \emph{width} of the tree decomposition is $\max\{|T_x|:x\in
V(T)\}-1$. The \emph{treewidth} of $G$, denoted by \tw{G}, is the
minimum width of a tree decomposition of $G$. For example, $G$ has
treewidth $1$ if and only if $G$ is a forest.

Let $G$ be a graph. Two subgraphs $X$ and $Y$ of $G$ \emph{touch} if
$X\cap Y\neq\emptyset$ or there is an edge of $G$ between $X$ and
$Y$. A \emph{bramble} in $G$ is a set of pairwise touching connected
subgraphs. A set $S$ of vertices in $G$ is a \emph{hitting set} of a
bramble \B\ if $S$ intersects every element of \B. The \emph{order} of
\B\ is the minimum size of a hitting set. The canonical example of a
bramble of order $\ell$ is the set of crosses (union of a row and
column) in the $\ell\times\ell$ grid. The following `Treewidth Duality
Theorem' shows the intimate relationship between treewidth and
brambles; see \cite{BD-CPC02} for an alternative proof.

\begin{theorem}[\cite{SeymourThomas-JCTB93}]
  \thmlabel{TreewidthBramble} A graph $G$ has treewidth at least
  $\ell$ if and only if $G$ contains a bramble of order at least
  $\ell+1$.
\end{theorem}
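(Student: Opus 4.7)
My plan is to prove the two directions separately. The backward direction ($\Leftarrow$) is a short subtree-intersection argument; the forward direction ($\Rightarrow$) is the substantive half, and I would approach it via the game-theoretic characterization of treewidth (or via the direct induction of Bellenbaum--Diestel cited in the text).

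For the backward direction, assume $\B$ is a bramble of order $\geq \ell+1$, and suppose, for contradiction, that $(T, \{T_x\}_{x\in V(T)})$ is a tree decomposition of $G$ of width at most $\ell - 1$. For each $B\in\B$, let $T(B) := \{x \in V(T) : T_x \cap V(B) \neq \emptyset\}$. First I would verify that $T(B)$ induces a subtree of $T$: this follows from the connectedness of $B$ together with the fact that, for each vertex of $G$, the indices of bags containing it form a subtree. Next I would check that whenever $B, B' \in \B$ touch, $T(B)$ and $T(B')$ share a node: a shared vertex $v$ gives a common node via $v$'s subtree, while a shared edge $uv$ gives a common node via the bag that must contain both endpoints. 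Thus $\{T(B) : B\in\B\}$ is a pairwise-intersecting family of subtrees of a tree, so by the Helly property they share a common node $x^*$, and $T_{x^*}$ is a hitting set of $\B$. Since $|T_{x^*}| \leq \ell$, this contradicts the order of $\B$ being $\geq \ell+1$.

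For the forward direction, I would use the cops-and-robbers characterization: $\tw{G} < \ell$ if and only if $\ell$ ``helicopter cops'' can catch a single robber on $G$ (cops jump to arbitrary vertices while the robber sees them coming and flees along unblocked paths). Assuming $\tw{G} \geq \ell$, the robber has a winning strategy; equivalently, $G$ has a ``haven'' of order $\ell+1$, a function $\beta$ that assigns to each $X \subseteq V(G)$ with $|X| \leq \ell$ a component $\beta(X)$ of $G-X$, monotonically in the sense that $\beta(X') \subseteq \beta(X)$ whenever $X \subseteq X'$. Out of this data I would assemble a bramble whose elements are built from the haven components: each is connected, the monotonicity of $\beta$ can be used to force pairwise touching, and no set $X$ of size $\leq \ell$ hits the bramble because $X$ misses its own haven component $\beta(X)$. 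The main obstacle is the game-theoretic duality itself --- the equivalence among havens, winning robber strategies, and the non-existence of a tree decomposition of width $\leq \ell-1$ --- which is the technical heart of Seymour--Thomas and requires a careful min-max or order-theoretic argument. Passing from a haven to a bramble with pairwise touching (rather than mere ``strong'' touching) also needs care, and is where I would expect the proof to consume the most ink; the Bellenbaum--Diestel approach is attractive precisely because it constructs the bramble directly by induction and sidesteps the game-theoretic machinery.
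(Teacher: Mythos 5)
The paper does not prove this statement at all: it is the Seymour--Thomas duality theorem, imported as a black box (with a pointer to Bellenbaum--Diestel for an alternative proof), so there is no in-paper argument to compare yours against. Judged on its own, your backward direction is correct and complete in outline: $T(B)$ is a nonempty subtree because $B$ is connected and the subtrees of adjacent vertices of $B$ intersect (in a bag containing that edge); touching brambles give intersecting subtrees; the Helly property for subtrees of a tree yields a common node $x^*$; and $T_{x^*}$, of size at most $\ell$, would hit $\B$, contradicting order at least $\ell+1$. This is the standard ``easy'' half and needs no further machinery.

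The forward direction, however, is a plan rather than a proof, and the gap is exactly where you say it is: the entire content of the theorem lives in showing that if no bramble (equivalently, no haven) of order $\ell+1$ exists then $G$ admits a tree decomposition of width at most $\ell-1$. You invoke the equivalence of $\tw{G}<\ell$ with the cops' winning strategy, and of $\tw{G}\geq\ell$ with the existence of a haven of order $\ell+1$, but those equivalences \emph{are} the theorem (up to the haven/bramble translation); citing them is circular as a proof strategy. Two further points would need real work even granting the game-theoretic framework: (i) the haven-to-bramble step --- showing the components $\beta(X)$ pairwise touch, and that no set of size $\leq\ell$ hits all of them because each such $X$ misses $\beta(X)$ --- is fine in the second clause but the first requires an argument you have not supplied; and (ii) the haven orders must be tracked carefully, since off-by-one errors between ``order $\ell$'' havens, brambles, and width-$(\ell-1)$ decompositions are the classic pitfall here. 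If you want a self-contained proof, the Bellenbaum--Diestel route you mention (induction on $|V(G)|$ minus the order of a maximum bramble, building the tree decomposition directly and extracting a higher-order bramble from any failure) is the one to write out; as it stands, your forward direction defers the theorem to itself.
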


This paper proves the following general lower bound on the treewidth
of cartesian products of highly connected graphs.

\begin{theorem}
  \thmlabel{ConnectivityTreewidth} For all $k$-connected graphs $G$
  and $H$ each with at least $n$ vertices,
$$\tw{\CP{G}{H}}\geq k(n -2k+2)-1\enspace.$$
\end{theorem}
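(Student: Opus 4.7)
The plan is to apply \thmref{TreewidthBramble} via the equivalent notion of a \emph{haven} (also due to \cite{SeymourThomas-JCTB93}): it suffices to exhibit a map $\beta$ that assigns to every $S \subseteq V(\CP{G}{H})$ with $|S|<k(n-2k+2)$ a non-empty component $\beta(S)$ of $\CP{G}{H}-S$, such that $\beta(S') \subseteq \beta(S)$ whenever $S \subseteq S'$.

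Fix such an $S$, and for each $v \in V(G)$ set $s_v:=|S \cap V(H_v)|$; call $v$ \emph{light} when $s_v<k$. Because $\sum_v s_v = |S| < k(n-2k+2)$, at most $n-2k+1$ vertices of $V(G)$ are non-light, so at least $|V(G)|-(n-2k+1) \geq 2k-1$ are light. The same argument for $V(H)$ with $r_w:=|S \cap V(G_w)|$ gives at least $2k-1$ light $w \in V(H)$. For every light $v$, the subgraph $H_v-S$ is connected (since $H_v \cong H$ is $k$-connected and only $s_v<k$ of its vertices are deleted) and non-empty (as $|V(H_v)| \geq n > k$); likewise $G_w-S$ is connected and non-empty for every light $w$.

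The key structural step is that all these light subgraphs lie in a single component of $\CP{G}{H}-S$. Given two light $v, v' \in V(G)$, at most $s_v+s_{v'} \leq 2(k-1)$ of the $\geq 2k-1$ light rows $w$ have $(v,w) \in S$ or $(v',w) \in S$; hence at least one light row $w$ satisfies $(v,w),(v',w) \notin S$. Then $G_w-S$ is connected and contains both these vertices, so it bridges $H_v-S$ with $H_{v'}-S$. By the symmetric argument any two light rows are linked; and for each light $v$, the same pigeonhole gives a light $w$ with $(v,w) \notin S$, so $H_v-S$ meets $G_w-S$ at $(v,w)$. Define $\beta(S)$ to be this unique common component.

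Monotonicity follows because $S \subseteq S'$ implies $|S \cap V(H_v)| \leq |S' \cap V(H_v)|$, so every $S'$-light vertex is $S$-light; then $H_v-S'$ is a non-empty subset of $H_v-S \subseteq \beta(S)$, so $\beta(S')$ meets $\beta(S)$ and hence $\beta(S') \subseteq \beta(S)$. Thus $\beta$ is a haven of order $k(n-2k+2)$, and \thmref{TreewidthBramble} yields the claimed treewidth bound. The main obstacle is the pigeonhole in the preceding paragraph: the tight count $(2k-1)-2(k-1)=1$ is precisely why the padding is $k-1$ on each side of the central range of size $n-2k+2$, and the argument uses the $k$-connectivity of $G$ and $H$ symmetrically and essentially.
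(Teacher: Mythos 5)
Your proof is correct, and it reaches the bound by the dual route: where the paper exhibits an explicit bramble (the ``crosses'' built from $2k-1$ columns and $2k-1$ rows of the product with at most $k-1$ vertices deleted from each copy) and then shows every hitting set has at least $k(n-2k+2)$ vertices, you exhibit a monotone haven of order $k(n-2k+2)$. The combinatorial core is the same in both arguments --- the hypothesis $|S|<k(n-2k+2)$ forces at least $2k-1$ light columns and $2k-1$ light rows, $k$-connectivity makes each light copy minus $S$ connected and non-empty, and the pigeonhole $2k-1>2(k-1)$ links them --- but your packaging buys two simplifications: the pairwise-touching condition for bramble elements is replaced by the (trivial) monotonicity of $\beta$, and the paper's auxiliary bipartite graph $Q$, needed to show each bramble element stays connected when the deleted vertices are adversarial, disappears because in your setting every copy you use is light and your two-step bridging argument gives connectivity directly. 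Two small points to tidy up. First, \thmref{TreewidthBramble} as stated is the bramble form of the duality; you should either cite the haven form explicitly (it is also in \cite{SeymourThomas-JCTB93}: a haven of order $\ell$ with your monotonicity property yields $\tw{G}\geq\ell-1$) or add one line converting your haven to a bramble, e.g.\ by checking that $\beta(S)$ and $\beta(S')$ always share a vertex $(v,w)$ with $v$ an $S$-light column and $w$ an $S'$-light row (the same $(2k-1)^2>2(2k-1)(k-1)$ count the paper uses), after which any hitting set $J$ with $|J|<k(n-2k+2)$ fails because it misses $\beta(J)$. Second, when $n\leq 2k-2$ there is no set $S$ with $|S|<k(n-2k+2)\leq 0$ and the claimed bound is vacuous; this degenerate case deserves a sentence, as in the paper.
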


\begin{proof}
  To prove this theorem, we construct a bramble \B\ in \CP{G}{H} and
  then apply \thmref{TreewidthBramble}.  If $n\leq 2k-2$ then the
  claim is vacuously true. Now assume that $n\geq 2k-1$.

  Let $\B$ be the set of all subgraphs $X$ of \CP{G}{H} formed in the
  following way. Let $S$ be a set of $2k-1$ vertices in $G$.  Let $T$
  be a set of $2k-1$ vertices in $H$.  Initialise $X$ to be the union
  of $\cup\{H_v:v\in S\}$ and $\cup\{G_w:w\in T\}$.  Now delete
  vertices from $X$ such that at most $k-1$ vertices are deleted from
  $H_v$ for each $v\in S$, and at most $k-1$ vertices are deleted from
  $G_w$ for each $w\in T$. We claim that \B\ is a bramble of
  \CP{G}{H}.

  First we prove that each $X\in \B$ is connected. Say $X$ is defined
  with respect to $S\subseteq V(G)$ and $T\subseteq V(H)$.  First note
  that for each $v\in S$ and $w\in T$, since $H_v$ and $G_w$ are
  $k$-connected, $H_v\cap X$ and $G_w\cap X$ are connected.  Let $Q$
  be the bipartite graph with $V(Q)=S\cup T$, where for all $v\in S$
  and $w\in T$, the edge $vw$ is in $Q$ whenever the vertex $(v,w)$ is
  in $X$ (i.e., it was not deleted).  The degree in $Q$ of each vertex
  $v\in S$ is at least $(2k-1)-(k-1)=k$ since at most $k-1$ vertices
  were deleted from $H_v$. Similarly, each vertex in $T$ has degree at
  least $k$ in $Q$. So $Q$ has $2k-1$ vertices in each colour class,
  and minimum degree $k$. If $Q$ is disconnected then some component
  $H$ of $Q$ contains at most $k-1$ vertices in $S$, implying that the
  vertices in $H\cap T$ have degree at most $k-1$. Thus $Q$ is
  connected.  Now consider two vertices $(v_1,w_1)$ and $(v_2,w_2)$ in
  $X$.  Thus $v_1w_1$ and $v_2w_2$ are edges of $Q$.  Since $Q$ is
  connected, there is a path $P$ in $Q$ between one endpoint of
  $v_1w_1$ and one endpoint of $v_2w_2$. For each 2-edge path $v w v'$
  of $P$, since $G_{w}\cap X$ is connected, there is a path in $X$
  between the vertices $(v,w)$ and $(v' ,w)$.  Similarly, for each
  2-edge path $w v w'$ of $P$, there is a path in $X$ between the
  vertices $(v,w)$ and $(v,w')$.  The union of these paths is a walk
  between $(v_1,w_1)$ and $(v_2,w_2)$ in $X$. Therefore $X$ is
  connected, as claimed.

  Now we prove that $X$ and $X'$ touch for all $X,X'\in\B$. Say $X$ is
  defined with respect to $S$ and $T$, and $X'$ is defined with
  respect to $S'$ and $T'$.  At most $(2k-1)(k-1)$ vertices in
  $S\times T'$ were deleted in the construction of $X$, and at most
  $(2k-1)(k-1)$ vertices in $S\times T'$ were deleted in the
  construction of $X'$. Since $|S\times T'|=(2k-1)^2>2(2k-1)(k-1)$,
  some vertex $(v,w)\in S\times T'$ was deleted in neither the
  construction of $X$ nor the construction of $X'$. Hence $(v,w)$ is
  in both $H_v\cap X$ and $G_w\cap X$. Thus $X$ and $X'$ have a common
  vertex.

  Therefore \B\ is a bramble. Let $J$ be a hitting set of \B.  We
  claim that $|J|\geq k(n-2k+2)$.  Let $S_0:=\{v\in V(G):|V(H_v)\cap
  J|\leq k-1\}$ and $T_0:=\{w\in V(H):|V(G_w)\cap J|\leq k-1\}$.  If
  $|S_0|\leq 2k-2$ then at least $n-(2k-2)$ pairwise-disjoint copies
  of $H$ contain at least $k$ vertices in $J$, implying $|J|\geq
  k(n-2k+2)$, as claimed. Otherwise, $|S_0|\geq 2k-1$. Similarly,
  $|T_0|\geq 2k-1$. Let $S\subseteq S_0$ and $T\subseteq T_0$ such
  that $|S|=|T|=2k-1$.  Let $X$ be the union of $\cup\{H_v- J:v\in
  S\}$ and $\cup\{G_w- J:w\in T\}$.  Thus $X\in\B$ (since $|V(H_v\cap
  J)|\leq k-1$ and $|V(G_w\cap J)|\leq k-1$ for each $v\in S$ and
  $w\in T$). However, $X\cap J=\emptyset$. Thus $J$ is not a hitting
  set for \B.  Hence the order of \B\ is at least $k(n-2k+2)$.  The
  result follows from \thmref{TreewidthBramble}.
\end{proof}

We now show that the bound in \thmref{ConnectivityTreewidth} is tight
(ignoring lower order terms and assuming $n\gg k$).  The
\emph{bandwidth} of a graph $G$, denoted by $\bw{G}$, is the minimum,
taken over all bijections $\phi:V(G)\rightarrow\{1,2,\dots,|V(G)|\}$,
of the maximum, taken over all edges $vw\in E(G)$, of
$|\phi(v)-|\phi(w)|$.  It is well known that $\tw{G}\leq\bw{G}$; see
\citep{Bodlaender-TCS98}.  Let $P_n^k$ be the $k$-th power of a path,
which has vertex set $\{1,2,\dots,n\}$, where $ij$ is an edge if and
only if $|i-j|\leq k$. Clearly $P_n^k$ is $k$-connected.  Let $\phi$
be the vertex ordering of \CP{P_n^k}{P_n^k} defined by
$\phi((x,y))=(x-1)n+y$. Each edge $(x,y)(x,y')$ has width
$|y'-y|\leq k$, and each edge $(x,y)(x',y)$ has width
$|(x'-x)n|\leq kn$. Hence $\tw{\CP{P_n^k}{P_n^k}}\leq \bw{\CP{P_n^k}{P_n^k}}\leq kn$.
(This upper bound can be slightly improved by ordering the vertices
with respect to the function $x(n+1)+yn$.)\ 

In fact, there is a much broader class of graphs that provide an upper
bound only slightly weaker than $kn$. Let $G$ and $H$ be  chordal graphs with $n$ vertices and
connectivity $k$. It is well known that $G$ and $H$ have
clique-number $k+1$ and treewidth $k$. A tree decomposition of $G$ with
width $k$ can be easily turned into a tree decomposition of
$\CP{G}{H}$ with width $(k+1)n-1$; see
\citep{Wood-ProductMinor,Djelloul-TCS09}. Thus
$\tw{\CP{G}{H}}\leq(k+1)n-1$. 

We expect that the dependence on $k$ in \thmref{ConnectivityTreewidth}
can be slightly improved (although it is not obvious how to do
so). For example, \thmref{ConnectivityTreewidth} with $k=1$ implies
that the $n\times n$ grid has treewidth at least $n-1$, whereas it
actually has treewidth $n$; see \citep{BGK-Algo08} for a proof.
Another interesting example is the toroidal grid graph \CP{C_n}{C_n}.
By \thmref{ConnectivityTreewidth} with $k=2$ and since $C_n$ is a
subgraph of $P_n^2$ ,
$$2n-5\leq \tw{\CP{C_n}{C_n}}\leq \tw{\CP{P_n^2}{P_n^2}}\leq
2n\enspace.$$ 


\begin{thebibliography}{15}
\providecommand{\natexlab}[1]{#1}
\providecommand{\url}[1]{\texttt{#1}}
\providecommand{\urlprefix}{}
\expandafter\ifx\csname urlstyle\endcsname\relax
  \providecommand{\doi}[1]{doi:\discretionary{}{}{}#1}\else
  \providecommand{\doi}{doi:\discretionary{}{}{}\begingroup
  \urlstyle{rm}\Url}\fi

\bibitem[{Balogh et~al.(2008)Balogh, Bezrukov, Harper, and Seress}]{BBHS-TCS08}
\textsc{J{\'o}zsef Balogh, Sergei~L. Bezrukov, Lawrence~H. Harper, and {\'A}kos
  Seress}.
\newblock On the bandwidth of 3-dimensional {H}amming graphs.
\newblock \emph{Theoret. Comput. Sci.}, 407(1-3):488--495, 2008.
\newblock \doi{10.1016/j.tcs.2008.07.029}.

\bibitem[{Bellenbaum and Diestel(2002)}]{BD-CPC02}
\textsc{Patrick Bellenbaum and Reinhard Diestel}.
\newblock Two short proofs concerning tree-decompositions.
\newblock \emph{Combin. Probab. Comput.}, 11(6):541--547, 2002.
\newblock \doi{10.1017/S0963548302005369}.

\bibitem[{Bodlaender(1998)}]{Bodlaender-TCS98}
\textsc{Hans~L. Bodlaender}.
\newblock A partial $k$-arboretum of graphs with bounded treewidth.
\newblock \emph{Theoret. Comput. Sci.}, 209(1-2):1--45, 1998.
\newblock \doi{10.1016/S0304-3975(97)00228-4}.

\bibitem[{Bodlaender et~al.(2008)Bodlaender, Grigoriev, and
  Koster}]{BGK-Algo08}
\textsc{Hans~L. Bodlaender, Alexander Grigoriev, and Arie M. C.~A. Koster}.
\newblock Treewidth lower bounds with brambles.
\newblock \emph{Algorithmica}, 51:81--98, 2008.
\newblock \doi{10.1007/s00453-007-9056-z}.

\bibitem[{Chandran and Kavitha(2006)}]{CK-DM06}
\textsc{L.~Sunil Chandran and T.~Kavitha}.
\newblock The treewidth and pathwidth of hypercubes.
\newblock \emph{Discrete Math.}, 306(3):359--365, 2006.
\newblock \doi{10.1016/j.disc.2005.12.010}.

\bibitem[{Chv{\'a}talov{\'a}(1975)}]{Chvat-DM75}
\textsc{Jarmila Chv{\'a}talov{\'a}}.
\newblock Optimal labelling of a product of two paths.
\newblock \emph{Discrete Math.}, 11:249--253, 1975.

\bibitem[{Djelloul(2009)}]{Djelloul-TCS09}
\textsc{Selma Djelloul}.
\newblock Treewidth and logical definability of graph products.
\newblock \emph{Theoret. Comput. Sci.}, 410(8-10):696--710, 2009.
\newblock \doi{10.1016/j.tcs.2008.10.019}.

\bibitem[{FitzGerald(1974)}]{FitzGerald-MC74}
\textsc{Carl~H. FitzGerald}.
\newblock Optimal indexing of the vertices of graphs.
\newblock \emph{Math. Comp.}, 28:825--831, 1974.
\newblock \doi{10.2307/2005704}.

\bibitem[{Harper(1966)}]{Harper-JCT66}
\textsc{Lawrence~H. Harper}.
\newblock Optimal numberings and isoperimetric problems on graphs.
\newblock \emph{J. Combinatorial Theory}, 1:385--393, 1966.

\bibitem[{Harper(2003)}]{Harper-TCS03}
\textsc{Lawrence~H. Harper}.
\newblock On the bandwidth of a {H}amming graph.
\newblock \emph{Theoret. Comput. Sci.}, 301(1-3):491--498, 2003.
\newblock \doi{10.1016/S0304-3975(03)00052-5}.

\bibitem[{Kojima and Ando(2002)}]{KA-DM02}
\textsc{Toru Kojima and Kiyoshi Ando}.
\newblock Bandwidth of the {C}artesian product of two connected graphs.
\newblock \emph{Discrete Math.}, 252(1-3):227--235, 2002.
\newblock \doi{10.1016/S0012-365X(01)00455-1}.

\bibitem[{Otachi and Suda(2011)}]{OS11}
\textsc{Yota Otachi and Ryohei Suda}.
\newblock Bandwidth and pathwidth of three-dimensional grids.
\newblock \emph{Discrete Mathematics}, 311(10-11):881 -- 887, 2011.
\newblock \doi{DOI: 10.1016/j.disc.2011.02.019}.

\bibitem[{Seymour and Thomas(1993)}]{SeymourThomas-JCTB93}
\textsc{Paul~D. Seymour and Robin Thomas}.
\newblock Graph searching and a min-max theorem for tree-width.
\newblock \emph{J. Combin. Theory Ser. B}, 58(1):22--33, 1993.
\newblock \doi{10.1006/jctb.1993.1027}.

\bibitem[{Wood(2007)}]{Wood-ProductMinor}
\textsc{David~R. Wood}.
\newblock Clique minors in cartesian products of graphs, 2007.
\newblock \urlprefix\url{http://arxiv.org/abs/0711.1189}.
\newblock Submitted.

\bibitem[{Wu et~al.(2010)Wu, Yang, and Yu}]{WYY10}
\textsc{Zefang Wu, Xu~Yang, and Qinglin Yu}.
\newblock A note on graph minors and strong products.
\newblock \emph{Appl. Math. Lett.}, 23(10):1179--1182, 2010.
\newblock \doi{10.1016/j.aml.2010.05.007}.

\end{thebibliography}

\def\cprime{$'$} \def\soft#1{\leavevmode\setbox0=\hbox{h}\dimen7=\ht0\advance
  \dimen7 by-1ex\relax\if t#1\relax\rlap{\raise.6\dimen7
  \hbox{\kern.3ex\char'47}}#1\relax\else\if T#1\relax
  \rlap{\raise.5\dimen7\hbox{\kern1.3ex\char'47}}#1\relax \else\if
  d#1\relax\rlap{\raise.5\dimen7\hbox{\kern.9ex \char'47}}#1\relax\else\if
  D#1\relax\rlap{\raise.5\dimen7 \hbox{\kern1.4ex\char'47}}#1\relax\else\if
  l#1\relax \rlap{\raise.5\dimen7\hbox{\kern.4ex\char'47}}#1\relax \else\if
  L#1\relax\rlap{\raise.5\dimen7\hbox{\kern.7ex
  \char'47}}#1\relax\else\message{accent \string\soft \space #1 not
  defined!}#1\relax\fi\fi\fi\fi\fi\fi}

\end{document}